\def\undersetbrace#1\to#2{\underbrace{#2}_{#1}}
\def\oversetbrace#1\to#2{\overbrace{#2}^{#1}}
\def\AMSunderset#1\to#2{\underset{#1}{#2}}
\def\AMSoverset#1\to#2{\overset{#1}{#2}}
\newtheorem*{proposition*}{Proposition}
\newtheorem*{theorem*}{Theorem}
\newtheorem*{lemma*}{Lemma}
\newtheorem*{corollary*}{Corollary}
\theoremstyle{definition}
\newtheorem*{remark*}{Remark}
\def\ign#1{}             
\def\o{\circ}
\def\X{\mathfrak X}
\def\al{\alpha}
\def\be{\beta}
\def\ga{\gamma}
\def\ep{\varepsilon}
\def\la{\lambda}
\def\rh{\rho}
\def\si{\sigma}
\def\ta{\tau}
\def\ph{\varphi}
\def\ps{\psi}
\def\Ga{\Gamma}
\def\Ph{\Phi}
\def\Ps{\Psi}
\def\i{^{-1}}
\def\inv{^{-1}}
\def\x{\times}
\def\p{\partial}
\let\on=\operatorname
\def\Diff{\on{Diff}}
\def\Dens{\on{Dens}}
\def\Prob{\on{Prob}}
\def\Vol{\on{Vol}}
\def\R{{\mathbb R}}
\begin{document}
\title[Geometry of the Fisher--Rao metric]
{Geometry of the Fisher--Rao metric on the space of smooth densities on a compact manifold}

\author{Martins Bruveris, Peter W. Michor}
\address{
Martins Bruveris: 
Department of Mathematics, 
Brunel University London, Ux\-bridge, UB8 3PH, United Kingdom
\newline
Peter W.\ Michor: Fakult\"at f\"ur Mathematik,
Universit\"at Wien, Os\-kar-Mor\-gen\-stern-Platz 1, A-1090 Wien, Austria.
}

\email{martins.bruveris@brunel.ac.uk}
\email{peter.michor@univie.ac.at}

\date{{\today} } 

\thanks{MB was supported by a BRIEF award from Brunel University London} 
\keywords{Fisher--Rao Metric; Information Geometry;  Invariant Metrics; Space of Densities; 
Surfaces of Revolution}
\subjclass[2010]{Primary 58B20, 58D15} 

\begin{abstract} It is known that
on a closed manifold of dimension greater than one, every smooth weak Riemannian metric on the space of smooth positive  densities that is invariant under the action of the diffeomorphism group, is of the form 
$$
G_\mu(\al,\be)=C_1(\mu(M)) \int_M \frac{\alpha}{\mu}\frac{\beta}{\mu}\,\mu + C_2(\mu(M))  \int_M\alpha \cdot \int_M\beta
$$
for some smooth functions $C_1,C_2$ of the total volume $\mu(M)$.
Here we determine the geodesics  and the curvature of this metric and study geodesic and metric completeness.
\end{abstract}
\def\LaTeXonly{}

\maketitle

\typeout{TEXTWIDTH: \prntlen{\textwidth}}


\subsection{Introduction}
The Fisher--Rao metric on the space $\on{Prob}(M)$ of probability densities is invariant under the action of the diffeomorphism group $\Diff(M)$.
Restricted to finite-dimensional submanifolds of $\on{Prob}(M)$, 
so-called statistical manifolds, it is called Fisher's information metric \cite{Ama1985}. 
A uniqueness result was established \cite[p. 156]{Cen1982} for Fisher's information 
metric on finite sample spaces and \cite{AJLS2014} extended it to infinite sample spaces.       
The Fisher--Rao metric on the infinite-dimensional manifold of all positive probability densities 
was studied in \cite{Fri1991}, including the computation of its curvature. 
In \cite{BBM2016} it was proved that any $\Diff(M)$-invariant Riemannian metric on the space $\on{Dens}_+(M)$ of smooth positive densities on a compact manifold $M$ without boundary is of the form
\begin{equation}\label{Fisher-Rao-metric} 
G_\mu(\al,\be)=C_1(\mu(M)) \int_M \frac{\alpha}{\mu}\frac{\beta}{\mu}\,\mu + C_2(\mu(M))  \int_M\alpha  \cdot \int_M\beta
\end{equation}
for some smooth functions $C_1,C_2$ of the total volume $\mu(M)$.
This implies that the Fisher--Rao metric on $\Prob(M)$ is, up to a multiplicative constant, the unique $\Diff(M)$-invariant metric. By Cauchy--Schwarz the metric \eqref{Fisher-Rao-metric} is positive definite if and only if 
$C_2(m)>-\frac1m C_1(m)$ for all $m>0$.

\subsection{The setting}
Let $M^m$ be a smooth compact manifold. It may have  boundary or it may even be a manifold with corners; i.e., modelled on open subsets of quadrants in $\mathbb R^m$. 
For a detailed description of the line bundle of smooth densities we refer to \cite{BBM2016} or \cite[10.2]{Mic2008}.
We let $\on{Dens}_+(M)$ denote the space of smooth positive densities on $M$, i.e., 
$\on{Dens}_+(M) = 
\{ \mu \in \Ga(\on{Vol}(M)) \,:\, \mu(x) > 0\; \forall x \in M\}$. Let $\on{Prob}(M)$ be the 
subspace of positive densities with integral 1 on $M$. Both spaces are smooth Fr\'echet manifolds;
in particular they are open subsets of the affine spaces of all densities and densities of integral 
1 respectively. For $\mu \in \on{Dens}_+(M)$ we have    
$T_\mu \on{Dens}_+(M) = \Ga(\on{Vol}(M))$ and for $\mu\in \Prob(M)$ we have 
$$
T_\mu\Prob(M)=\{\al\in \Ga(\on{Vol}(M)): \int_M\al =0\}.
$$
The Fisher--Rao metric, given by 
$G^{\operatorname{FR}}_\mu(\al,\be) = \int_M \frac{\al}{\mu}\frac{\be}{\mu}\mu$ is a Riemannian metric on $\on{Prob}(M)$; it is invariant under the natural action of the group $\Diff(M)$ of all diffeomorphisms of $M$. 
If $M$ is compact without boundary of dimension $\ge 2$, the Fisher-Rao metric is the unique $\Diff(M)$-invariant metric up to a multiplicative constant.
This follows, since any $\Diff(M)$-invariant Riemannian metric on $\on{Dens}_+(M)$ is of the form \eqref{Fisher-Rao-metric} as proved in \cite{BBM2016}.


\subsection{Overview}
\label{overview}

We will study four different representations of the metric $G$ in \eqref{Fisher-Rao-metric}.
The first representation is $G$ itself on the space $\on{Dens}_{+}(M)$. 
Next we fix a density $\mu_0 \in \on{Prob}(M)$ and consider the mapping
\[
R:\Dens_+(M)\to C^\infty(M,\mathbb R_{>0})\,, \qquad R(\mu) = f = \sqrt{\frac{\mu}{\mu_{0}}}\,.
\]
This map is a diffeomorphism with inverse $R\i(f)= f^2\mu_0$, and we will denote the induced metric by $\tilde G = \left(R\inv\right)^\ast G$; it is given by the formula
\[
\tilde G_{f}(h,k) = 4 C_1(\| f\|^2) \langle h, k \rangle
+ 4 C_2(\| f \|^2) \langle f, h \rangle \langle f, k \rangle\,,
\]
with $\| f\|^2 = \int_M f^2 \mu_0$ denoting the $L^2(\mu_0)$-norm, and this formula makes sense for $f \in C^{\infty}(M,\mathbb R)$. See Sect.\ \ref{geodesics} for calculations.

Next we take the pre-Hilbert space $(C^{\infty}(M,\mathbb R), \langle\;,\;\rangle_{L^2(\mu_0)})$ and pass to polar coordinates.
Let  
$S = \{ \ph \in L^2(M,\R) \,:\, \int_M \ph^2 \mu_0 = 1\}$ denote the $L^2$-sphere. Then
\[
\Ph : C^\infty(M, \R_{>0}) \to \R_{>0} \x (S \cap C^\infty_{>0})\,,\qquad
\Ph(f) = (r,\ph) = \left(\| f \|, \frac f{\|f\|}\right)\,,
\]
is a diffeomorphism, where $C^\infty_{>0} = C^\infty(M,\R_{>0})$; its inverse is $\Ph\i(r,\ph)= r.\ph$.  We set $\bar G = \left(\Ph\inv\right)^\ast \tilde G$; the metric $\bar G$ has the expression
\[
\bar G_{r,\ph} = g_1(r) \langle d\ph, d\ph \rangle + g_2(r) dr^2\,,
\]
with $g_1(r) = 4C_1(r^2) r^2$ and $g_2(r) = 4\left(C_1(r^2) + C_2(r^2) r^2\right)$.
Finally we change the coordinate $r$ diffeomorphically to 
\[
s= W(r) = \int_1^r \sqrt{g_2(\rh)} \,d\rh\,.
\]
Then, defining $a(s) = 4 C_1(r(s)^2) r(s)^2$, we have
\[
\bar G_{s,\ph} = a(s) \langle d\ph, d\ph \rangle + ds^2\,.
\]
We will use $\bar G$ to denote the metric in both $(r,\ph)$ and $(s,\ph)$ coordinates.
Let $W_- = \lim_{r\to 0+} W(r)$ and $W_+ = \lim_{r\to \infty} W(r)$. Then $W: \R_{>0} \to (W_-, W_+)$ is a diffeomorphism. This completes the first row in Fig.~\ref{fig:diagram}. The geodesic equation of $G$ in the various representations will be derived in Sect.~\ref{geodesics}. The formulas for the geodesic equation and later for curvature are infinite-dimensional analoga of the corresponding formulas for warped products; see \cite[p.~204ff]{ONeill1983} or \cite[Chap.~7]{Chen2011}.

The four representations are summarized in the following diagram.
\[
\xymatrix @C-0.5pc { 
\on{Dens}_+(M) \ar[r]^-R  & C^\infty(M,\R_{>0}) \ar[r]^-\Ph &
\R_{>0} \x (S \cap C^\infty_{>0}) \ar[r]^-{W \x \on{Id}} & 
(W_-, W_+) \x (S \cap C^\infty_{>0})\,.
}
\]


\begin{figure}
\[
\xymatrix @C-1pc { 
\xyoption{rotate} 
\on{Dens}_+(M) \ar[r]^-R \ar[d]  & C^\infty(M,\mathbb R_{>0}) \ar[r]^-\Ph \ar[d] &
\mathbb R_{>0} \!\x\! S \cap C^\infty_{>0} \ar[r]^-{W \x \on{Id}} \ar[d] & 
(W_-, W_+) \!\x\! S \cap C^\infty_{>0} \ar[d] \\
\Ga_{C^1}(\on{Vol}(M)) \!\setminus\! \{0\} \ar[d] & 
C^\infty(M,\mathbb R) \!\setminus\! \{0\} \ar[l]_{\;\;R\i} \ar[r]^-\Ph \ar[d] &
\mathbb R_{>0} \!\x\! S \cap C^\infty\ar[r]^-{W \x \on{Id}} \ar[d] & \mathbb R \!\x\! S \cap C^\infty \ar[d] \\
\Ga_{L^1}(\on{Vol}(M)) \!\setminus\! \{0\} \ar[r]^-R & 
L^2(M,\mathbb R) \!\setminus\! \{0\} \ar[r]^-\Ph &
\mathbb R_{>0} \!\x\! S \ar[r]^-{W \x \on{Id}} & \mathbb R \!\x\! S
}
\]
\caption{Representations of $\on{Dens}_+(M)$ and its completions. In the second and third rows we assume that $(W_-, W_+) = (-\infty,+\infty)$ and we note that $R$ is a diffeomorphism only in the first row.}
\label{fig:diagram}
\end{figure}

Since $\bar G$ induces the canonical  metric on $(W_-, W_+)$, a necessary condition for $\bar G$ 
to be geodesically complete is $(W_-, W_+) = (-\infty, +\infty)$. Rewritten in terms of the functions $C_1$ and  $C_2$ this becomes
\[
W_+ = \infty
\Leftrightarrow\left(
\int_1^\infty r^{-1/2} \sqrt{C_1(r)} \,dr = \infty
\text{ or }
\int_1^\infty \sqrt{C_2(r)} \,dr = \infty
\right)\,,
\]
and similarly for $W_-=-\infty$, with the limits of integration being 0 and 1.
If $\bar G$ is incomplete, i.e., $W_->-\infty$ or $W_+<\infty$, there are sometimes geodesic completions. See Sect.~\ref{completions} for details.

We now assume that $(W_-, W_+) = (-\infty, +\infty)$. The metrics $\bar G$ and $\tilde G$ can be extended to the spaces $\R\x S \cap C^\infty$ and $C^\infty(M,\R) \setminus \{0\}$ and the last two maps in the diagram
\[
\xymatrix{
\Ga_{C^1}(\on{Vol}(M)) \!\setminus\! \{0\} & 
C^\infty(M,\R) \!\setminus\! \{0\} \ar[l]_{\qquad R\i} \ar[r]^-\Ph &
\R_{>0} \x S \cap C^\infty \ar[r]^-{W \x \on{Id}}& \R \x S \cap C^\infty
}
\]
are bijections. The extension of $R\inv$ is given by $R\inv(f) = f |f| \mu_0$; it does not map into smooth densities any more, but only into $C^1$-sections of the volume bundle; however, $R\inv$ is not surjective into $C^1$-sections, because the loss of regularity for $R\inv(f)$ occurs only at point where $f$ is $0$.
The last two maps, $\Ph$ and $W \x \on{Id}$, are diffeomorphisms.
The following will be shown in Sect.~\ref{completeness}: $(W_-, W_+) = (-\infty, +\infty)$ implies that $(\R \x S \cap C^\infty, \bar G)$ is geodesically complete and hence so are $(\R_{>0} \x S \cap C^\infty, \bar G)$ and $(C^\infty(M,\R) \setminus \{0\}, \tilde G)$.

Finally we consider the metric completions, still assuming that $(W_-, W_+) = (-\infty, +\infty)$. 
For $\bar G$ this is $\R \x S$ or $\R_{>0} \x S$ in $(s,\ph)$ or $(r,\ph)$-coordinates, respectively, as shown in Sect.~\ref{completeness}. The metrics and maps can be extended to
\[
\xymatrix{
\Ga_{L^1}(\on{Vol}(M)) \setminus \{0\} \ar[r]^-R & 
L^2(M,\R) \setminus \{0\} \ar[r]^-\Ph &
\R_{>0} \x S \ar[r]^-{W \x \on{Id}} & \R \x S
}\,.
\]
Here $\Ga_{L^1}$ denotes the space of $L^1$-sections. The extension of $R$ is given by $R(\mu) = \on{sgn}(\mu) \sqrt{|\mu|/\mu_0}$ and its inverse is $R\inv(f) = f |f| \mu_0$ as before. The last two maps are diffeomorphisms and hence $(L^2(M,\R) \setminus \{0\}, \tilde G)$ is metrically complete. The extension of $R$ is bijective, but not a diffeomorphism. It is continuous, but not $C^1$, and its inverse is $C^1$, but not $C^2$; furthermore $DR\inv(f)$ is not surjective if $f=0$ on a set of positive measure. However we can use $R$ to pull back the geodesic distance function from $L^2(M,\R) \setminus \{0\}$ to $\Ga_{L^1}(\on{Vol}(M)) \setminus \{0\}$ to obtain a complete metric on the latter space, that is compatible with the standard topology.

\subsection{The inverse $R\i$ and geodesic completeness}
There is more than one choice for the extension of $R\inv(f) = f^2 \mu_0$ from $C^\infty(M,\R_{>0})$ to $C^\infty(M,\R)$. The choice $R\inv(f) = f |f| \mu_0$ remains injective and can be further extended to a bijection on the metric completion $L^2(M,\R) \setminus \{0\}$. We can consider the equally natural extension $Q$ and its factorizarion given by
\begin{gather*}
\xymatrix{
C^\infty(M, \mathbb R) \ar[rr]^{Q} \ar[dr]_{Q_1} && \Ga_{\ge 0}(\Vol(M))\\
& \{|f|: f\in C^\infty(M,\mathbb R)\} \ar[ur]_{Q_2}&
}
\\
Q(f) = f^2\mu_0\,,\quad Q_1(f) = |f|\,, \quad Q_2(|f|) = |f|^2\mu_0\,.
\end{gather*}
into the space of smooth, nonnegative sections. The map $Q$ is not surjective; see \cite{KLM04} for a discussion of smooth non-negative functions admitting smooth square roots. 

The image $ \{|f|: f\in C^\infty(M,\mathbb R)\}$ of $Q_1$ looks somewhat like the orbit space of a discrete reflection group:
An example of a codimension 1 wall of the image could be $\{|f|: f\in C^\infty(M,\mathbb R), f(x)=0\}$ for one fixed point $x\in M$. Since this is dense in the $L^2$-completion of $T_f C^\infty(M,\mathbb R)$ with respect to $\tilde G_f$, we do not have a reflection at this wall.
Fixing $\ph_0\in S\cap C^\infty$ and considering $\{(r,\ph) \in \mathbb R_{>0}\x S\cap C^\infty: \langle \ph_0, \ph\rangle = 0 \}$ we can write the orthogonal reflection $(r, t_1 \ph_0 + t_2 \ph)\mapsto (r, -t_1\ph_0 + t_2\ph)$.
Geodesics in $(C^\infty(M,\mathbb R),\tilde G)$ are mapped by $Q_1$ to curves that are geodesics in the interior $C^\infty(M,\mathbb R_{>0})$, and that are reflected following Snell's law at any hyperplanes in the boundary for which the angle makes sense. The mapping $Q_2$ then smoothes out the reflection to a `quadratic glancing of the boundary' if one can describe the smooth structure of the boundary. It is tempting to paraphrase this as: \emph{The image of $Q$ is geodesically complete.}
But note that: \thetag{1} The metric $G$ becomes ill-defined on the boundary. \thetag{2} The boundary is very complicated; each closed subset of $M$ is the zeroset of a smooth non-negative function and thus corresponds to a `boundary component'. Some of them  `look like reflection walls'. One could try to set up a theory of infinite dimensional stratified Riemannian manifolds and geodesics on them to capture this notion of geodesic completeness, similarly to \cite{Michor03orbit}. But the situation is quite clear geometrically, and we prefer to consider the geodesic completion described by the inverse $R\i$ used in this paper, which is perhaps more natural.

\subsection{Geodesics of the Fisher-Rao metric on $\Dens_+(M)$}\label{geodesics}
In \cite{Fri1991} it was shown that $\Prob(M)$ has constant sectional curvature for the Fisher-Rao metric.  For fixed 
$\mu_{0}\in \Prob(M)$ we consider the mapping 
$$R:\Dens_+(M)\to C^\infty(M,\mathbb R_{>0}), \qquad R(\mu) = \sqrt{\frac{\mu}{\mu_{0}}}\,.$$
The inverse $R\i: C^{\infty}(M,\mathbb R_{>0})\to \Dens_{+}(M)$ is given by $R\i(f)=f^{2}\mu_{0}$; its tangent mapping is $T_{f}R\i.h = 2fh\mu_{0}$.

\begin{remark*}
In \cite{KLMP2013} it was shown that for $C_1 \equiv 1$ and $C_2\equiv 0$ the rescaled map 
$R(\mu) = 2 \sqrt{\frac{\mu}{\mu_{0}}}$ is an isometric diffeomorphism from $\on{Prob}(M)$ onto the open subset $C^\infty(M, \R_{>0}) \cap \{ f \,:\, \int f^2 \mu_0 = 4\}$ of the $L^2$-sphere of radius $2$ in the pre-Hilbert space 
$(C^{\infty}(M,\mathbb R), \langle\;,\;\rangle_{L^{2}(\mu_{0})})$. For a general function $C_1$ the same holds for 
$R(\mu) = \la \sqrt{\frac{\mu}{\mu_{0}}}$ and the $L^2$-sphere of radius $\la$, where $\la>0$ is a solution of the equation $\la^2 = 4C_1(\la^{-2})$.
\end{remark*}
 
The Fisher--Rao metric 
induces the following metric on the open convex cone 
$C^{\infty}(M,\mathbb R_{>0})\subset C^{\infty}(M,\mathbb R)$:
\begin{multline*}\tag{a}
\left((R\i)^{*}G\right)_{f}(h,k) = G_{R\i(f)}(T_{f}R\i.h, T_{f}R\i.k) = G_{f^2\mu_0}(2fh\mu_0,2fk\mu_0)
\\
= C_1(\|f\|^2_{L^2(\mu_0)}) \int \frac{2fh\mu_0}{f^2\mu_0}\frac{2fk\mu_0}{f^2\mu_0} f^2\mu_0 
+ C_2(\|f\|^2_{L^2(\mu_0)})\int 2fh\mu_0\cdot \int 2fk\mu_0
\\
= 4C_1(\|f\|^2) \int hk\mu_0 + 4C_2(\|f\|^2)\int fh\mu_0\cdot \int fk\mu_0
\\= 4C_1(\|f\|^2) \langle h,k\rangle  + 4C_2(\|f\|^2)\langle f,h\rangle \langle f,k\rangle
\\
\shoveleft{= 4C_1(\|f\|^2) \Big\langle h-\frac{\langle f,h\rangle}{\|f\|^2} f,k-\frac{\langle f,k\rangle}{\|f\|^2} f\Big\rangle\; +} 
\\ 
+ 4\big(C_2(\|f\|^2).\|f\|^2 + C_1(\|f\|^2)\big)\Big\langle \frac{f}{\|f\|},h\Big\rangle \Big\langle \frac{f}{\|f\|},k\Big\rangle\,,
\end{multline*}
where in the last expression we split $h$ and $k$ into the parts  perpendicular to $f$ and multiples of $f$. 

We now switch to 
\emph{polar coordinates on the pre-Hilbert space}: Let $S = \{\ph\in L^2(M,\R): \int \ph^2\mu_0 =1 \}$ denote the sphere, and let $S\cap C^{\infty}_{>0}$ be the intersection with the positive cone. Then 
$C^{\infty}(M,\mathbb R) \setminus \{0\}\cong \mathbb R_{>0}\x S\cap C^{\infty}$ via 
\[
\Ph : C^\infty(M, \R) \setminus \{0\} \to \R_{>0} \x S\,,\qquad
\Ph(f) = (r,\ph) = \left(\| f \|, \frac f{\|f\|}\right)\,.
\]
Note that $\Ph(C^{\infty}(M,\mathbb R_{>0}))=\mathbb R_{>0}\x S\cap C^{\infty}_{>0}$.
We have $f = \Ph\i(r,\ph) = r.\ph$
thus $df = r\,d\ph +\ph\,dr $, 
where $r\,d\ph(h)= h- \langle\ph,h\rangle\ph$ is the orthogonal projection onto the tangent space of $S$ at $\ph$ and $dr(h)=\langle\ph,h\rangle$. The Euclidean (pre-Hilbert) metric  in polar coordinates is given by
 \begin{align*}
 \langle df,df \rangle &= \langle \ph.dr + r.d\ph,\ph.dr + r.d\ph\rangle 
 = \langle\ph,\ph\rangle dr^2 + 2r.\langle\ph,d\ph\rangle.dr + r^2\langle d\ph,d\ph\rangle
 \\&
 = dr^2 + r^2\langle d\ph,d\ph\rangle\,.
 \end{align*}
The pullback metric is then 
\begin{align*}\tag{b}
\bar G=\left((\Ph\i)^\ast \tilde G\right) 
&= 4C_1(r^2) r^2 \langle d\ph,d\ph\rangle + 4\left( C_2(r^2) r^2 + C_1(r^2)\right) dr^2 
\\
&=  g_1(r) \langle d\ph,d\ph\rangle + g_2(r) dr^2
\\
&= a(s) \langle d\ph,d\ph\rangle + ds^2 \,,
\end{align*}
where we introduced the functions 
$$ g_1(r) = 4C_1(r^2) r^2
\quad \text{ and }\quad
g_2(r) = 4\left( C_2(r^2) r^2 + C_1(r^2)\right)\,,$$
and where in the last expression  we changed the coordinate $r$ 
diffeomorphically to 
$$
s(r) = 2 \int_1^r \sqrt{ C_2(\rh^2) \rh^2 + C_1(\rh^2)}\,d\rh
\quad\text{ and let }a(s) = 4C_1(r(s)^2) r(s)^2. 
$$

The resulting metric is
a radius dependent scaling of the metric on the sphere times a different radius dependent scaling of the metric on $\R_{>0}$. Note that the metric \thetag{b} (as well as the metric in the last expression of \thetag{a}) is actually well-defined on $C^{\infty}(M,\mathbb R)\setminus \{0\} \cong \mathbb R_{>0}\x S \cap C^\infty$; this leads to a (partial) geodesic completion of $(\Dens_+(M), G)$.

Geodesics for the metric \thetag{b} follow great circles on the sphere with some time dependent stretching, since reflection at any hyperplane containing this great circle is an isometry. 

We derive the geodesic equation. Let 
$[0,1]\x(-\ep,\ep)\ni (t,s)\mapsto (r(t,s),\ph(t,s))$ be a smooth variation with fixed ends of a curve $(r(t,0),\ph(t,0))$.
The energy of the curve and its derivative with respect to the variation parameter $s$ are as follows, where $\nabla^S$ is the covariant derivative on the sphere $S$.
\begin{align*}
E(r,\ph) &=  
\int_0^1 \left( \frac 12  g_1(r) \langle \ph_t, \ph_t \rangle
+ \frac 12 g_2(r). r_t^2 \right) dt \\
\p_s E(r,\ph) &= 
\int_0^1 \Big( \frac 12  g_1'(r).r_s \langle \ph_t, \ph_t \rangle + 
 g_1(r) \langle \nabla^S_{\p_s} \ph_t, \ph_t \rangle + \\
&\qquad\qquad
+\frac 12 g_2'(r).r_s.r_t^2 + g_2(r).r_t.r_{ts} \Big) dt \\
&= \int_0^1 \Big( \frac 12  g_1'(r).r_s \langle \ph_t, \ph_t \rangle
-  g_1'(r).r_t \langle \ph_s, \ph_t \rangle 
-  g_1(r) \langle \ph_s, \nabla^S_{\p_t} \ph_t \rangle + \\
&\qquad\qquad
+\frac 12 g_2'(r).r_s.r_t^2 - g_2'(r).r_t^2.r_s - g_2(r).r_{tt}.r_s \Big) dt \\
&= \int_0^1 \Big( \frac 12  g_1'(r) \langle \ph_t, \ph_t \rangle
- \frac 12 g_2'(r).r_t^2 - g_2(r).r_{tt} \Big) r_s \\
&\qquad\qquad
- \Big(  g_1'(r).r_t \langle \ph_s, \ph_t \rangle 
+  g_1(r) \langle \ph_s, \nabla^S_{\p_t} \ph_t \rangle \Big) dt\,.
\end{align*}
Thus the geodesic equation is
\begin{equation*}\tag{c}\boxed{\;
\begin{aligned}
\nabla^S_{\p_t}\ph_{t} &= 
- \p_t\left( \log  g_1(r) \right) \ph_t
\\
r_{tt} &=
\frac 12 \frac{ g_1'(r)}{g_2(r)} \langle \ph_t, \ph_t \rangle
-\frac 12 \p_t \left( \log g_2(r) \right) r_t
\end{aligned}
\;}\end{equation*}
Using the first equation we get:
\begin{align*}
\p_t\langle \ph_t,\ph_t\rangle 
&= 2\langle\nabla_{\p_t}\ph_t,\ph_t\rangle 
= -2\, \p_t\left( \log  g_1(r) \right) \langle \ph_t, \ph_t \rangle
\\
\p_t \left( \log \langle \ph_t, \ph_t \rangle \right)
&= -2\, \p_t\left( \log g_1(r) \right)
\\
\log(\|\ph_t\|^2) &= -2 \log g_1(r) + 2\log A_0 
\quad\text{ with }\quad
A_0 = g_1(r)\, \| \ph_t \| \,,
\end{align*}
which describes the speed of $\ph(t)$ along the great circle in terms of $r(t)$; note that the quantity $g_1(r) \| \ph_t\|$ is constant in $t$. The geodesic equation \thetag{c} simplifies to 
\begin{equation*}\tag{d}\boxed{\;
\begin{aligned}
\nabla^S_{\p_t}\ph_{t} &= 
-\p_t\left( \log g_1(r) \right) \ph_t
\\
r_{tt} &= \frac {A_0^2}2 \frac{g_1'(r)}{g_1(r)^2 g_2(r)}
-\frac 12 \p_t \left( \log g_2(r) \right) r_t
\end{aligned}
\;}\end{equation*}
with $g_1(r) = 4C_1(r^2) r^2$ and 
$g_2(r) = 4\left( C_2(r^2) r^2 + C_1(r^2)\right)$.

We can solve equation \thetag{d} for $\ph$ explicitely. Given initial conditions $\ph_0, \ps_0$, the geodesic $\tilde \ph(t)$ on the sphere with radius $1$ satisfying $\tilde \ph(0) = \ph_0$, $\tilde \ph_t(0) = \ps_0$ is
\[
\tilde\ph(t) = \cos(\| \ps_0\| t) \ph_0 + \sin(\|\ps_0\| t ) \frac{\ps_0}{\|\ps_0\|}\,.
\]
We are looking for a reparametrization $\ph(t) = \tilde \ph(\al(t))$. Inserting this into the geodesic equation we obtain
\begin{align*}
\p_t^2 \left(\tilde \ph(\al) \right)
- \left\langle \p_t^2 \left(\tilde \ph(\al)\right), 
\frac{\tilde \ph(\al)}{\| \tilde \ph(\al) \|} \right\rangle \tilde \ph(\al)
&= -\p_t \left(\log g_1(r) \right) \p_t \left(\tilde \ph(\al)\right) \\
\left(\nabla^S_{\p_t}\tilde \ph_t\right)(\al) \al_t^2 + \tilde\ph_t(\al) \al_{tt}
- \left\langle \tilde\ph_t(\al) \al_{tt}, 
\frac{\tilde \ph(\al)}{\| \tilde \ph(\al) \|} \right\rangle \tilde \ph(\al)
&= -\p_t \left(\log g_1(r) \right) \tilde\ph_t(\al) \al_t \\
\al_{tt} = \p_t \left(\log g_1(r) \right) \al_t\,.
\end{align*}
With intial conditions $\al(0) = 0$ and $\al_t(0) = 1$ this equation has the solution
\[
\al(t) ={g_1(r_0)} \int_0^{t} \frac{1}{g_1(r(\ta))} \,d\ta\,,
\]
where $r_0 = r(0)$ is the initial condition for the $r$-component of the geodesic.

If the metric is written in the form $\bar G = ds^2 + a(s)\langle d\ph,d\ph\rangle$,   equation \thetag{d} becomes
\begin{equation*}
s_{tt} = \frac {A_0^2}2 \frac{a'(s)}{a(s)^2}\,,\quad
\text{ for  } A_0 = a(s) \|\ph_t\|\,, 
\end{equation*}
where $\ph(t)$ is given explicitly as above. This can be integrated into the form
\begin{equation*}\tag{e}\boxed{\;
s_{t}^2 = -\frac {A_0^2}{a(s)} + A_1\,,\quad A_1 \text{ a constant.}
\;}\end{equation*}

\subsection{Relation to hypersurfaces of revolution} \label{revolution}
We consider the metric $\bar G$
on $(W_-,W_+)\x S\cap  C^\infty$ where
$\bar G_{s,\ph}=a(s) \langle d\ph, d\ph \rangle + ds^2$ and
$a(s) = 4C_1(r(s)^2) r(s)^2$. 
Then the map $\Ps$ is an isometric embedding (remember $\langle \ph,d\ph\rangle=0$ on 
$S\cap  C^\infty$),
\begin{align*}
&\Ps: ((W_-,W_+)\x S\cap  C^\infty, \bar G) \to \big(\mathbb R\x C^{\infty}(M,\mathbb R), 
du^2 + \langle df,df\rangle\big)\,, 
\\&
\Ps(s,\ph) = \Big(\int_0^s \sqrt{1 -\frac{ a'(\si)^2}{4 a(\si)}}\,d\si\;,\; \sqrt{ a(s)}\ph\Big)\,,
\end{align*}
In fact it is defined and smooth only on the open subset  
$$\left\{(s,\ph)\in (W_-,W_+)\x S\cap  C^{\infty}: a'(s)^2 < 4 a(s)\right\}\,.$$
We will see in Sect.~\ref{curvature} that the condition $a'(s)^2 < 4 a(s)$ is equivalent to a sign condition on the sectional curvature; to be precise
\[
a'(s)^2 < 4 a(s) \Leftrightarrow \on{Sec}_{(s,\ph)}(\on{span}(X,Y)) > 0\,,
\]
where $X,Y \in T_\ph S$ is any $\bar G$-orthonormal pair of tangent vectors.
Fix some $\ph_0\in S\cap C^\infty$ and consider the generating curve 
\[
\ga(s) = \Big(\int_0^s \sqrt{1 -\frac{ a'(\si)^2}{4 a(\si)}}\,d\si\;,\; \sqrt{ a(s)} \ph_0\Big)\in \R \x C^\infty(M,\R)\,;
\]
then $\ga(s)$ is already arc-length parametrized!

Any arc-length parameterized curve $I\ni s\mapsto (c_1(s),c_2(s))$ in $\mathbb R^2$ generates a hypersurface of revolution  
$$\{(c_1(s), c_2(s)\ph): s\in I, \ph\in S\cap C^\infty\}\subset \mathbb R\x C^\infty(M,\mathbb R)\,,$$
and the induced metric in the $(s,\ph)$-parameterization is  $c_2(s)^2\langle d\ph,d\ph\rangle + ds^2$.

This suggests that the moduli space of hypersurfaces of revolution is naturally embedded in the moduli space of all metrics of the form $a(s) \langle d\ph, d\ph \rangle + ds^2$. 
Let us make this more precise in an example: In the case of $S=S^1$ and the tractrix $(c_1,c_2)$, the surface of revolution is the pseudosphere (curvature $-1$) whose universal cover is only part of the hyperbolic plane. But in polar coordinates we get a space whose universal cover is the whole hyperbolic plane. In detail: the arc-length parametrization of the tractrix and the induced metric are
\begin{align*}
&c_1(s) =  \int_0^s\sqrt{1-e^{-2\si}}\,d\si = \on{Arcosh}\big({e^{s}}\big) - \sqrt{1-e^{-2s}}
,\quad c_2(s) = e^{-s},\quad s> 0
\\
&a(s)\,d\ph^2 + ds^2 = e^{-2s}d\ph^2 + ds^2, \qquad s\in \mathbb R\,.
\end{align*}

\subsection{Completeness}
\label{completeness}

In this section we assume that $(W_-, W_+) = (-\infty, +\infty)$, which is a necessary and sufficient condition for completeness. First we have the following estimate for the geodesic distance $\on{dist}$ of the metric $\bar G$, which is valid on bounded metric balls. Let $\on{dist}_{S}$ denote the geodesic distance on $S$ with respect to the standard metric.

\begin{lemma*}
\label{lem:dist_est}
Let $(W_-, W_+) = (-\infty, +\infty)$, $(s_0, \ph_0) \in \R \x S$ and $R>0$. Then there exists $C>0$, such that
\begin{multline*}
 C\inv\left( \on{dist}_S(\ph_1, \ph_2) + |s_1 - s_2| \right)
\leq \on{dist}\left((s_1, \ph_1), (s_2, \ph_2)\right) \leq \\
 \leq C \left( \on{dist}_S(\ph_1, \ph_2) + |s_1 - s_2| \right)\,,
\end{multline*}
holds for all $(s_i, \ph_i)$ with $\on{dist}\left((s_0, \ph_0), (s_i, \ph_i)\right) < R$, $i=1,2$.
\end{lemma*}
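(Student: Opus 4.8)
The plan is to exploit that $\bar G = a(s)\langle d\ph,d\ph\rangle + ds^2$ is a warped product with no cross terms, together with two elementary facts. First, the warping function $a(s) = g_1(r(s)) = 4C_1(r(s)^2)r(s)^2$ is everywhere strictly positive: positive-definiteness of $\tilde G$ forces $C_1>0$ (take $h=k\perp f$ in the formula for $\tilde G_f$), and $r(s)>0$ for every finite $s$. Second, the projection $\pi_s\colon (s,\ph)\mapsto s$ is $1$-Lipschitz, because $\bar G(\eta',\eta') = a(s)\|\ph_t\|^2 + s_t^2 \ge s_t^2$ for any curve $\eta(t)=(s(t),\ph(t))$. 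The whole argument then reduces to comparing $\bar G$-lengths with the product quantity $\on{dist}_S(\ph_1,\ph_2)+|s_1-s_2|$ once everything has been trapped inside a compact $s$-slab on which $a$ is pinched between two positive constants.

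I first fix the slab. Since $|s_i-s_0|\le \on{dist}((s_0,\ph_0),(s_i,\ph_i))<R$, both $s_1,s_2$ lie in $(s_0-R,s_0+R)$, and $d:=\on{dist}((s_1,\ph_1),(s_2,\ph_2))<2R$. The $1$-Lipschitz property shows moreover that any path issuing from $(s_1,\ph_1)$ of $\bar G$-length $<2R+1$ keeps its $s$-coordinate in the compact interval $I'=[s_0-3R-1,\,s_0+3R+1]$, which depends only on $s_0$ and $R$. I set $m'=\min_{I'}a>0$ and $M'=\max_{I'}a<\infty$.

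For the upper bound I would exhibit an explicit competitor: move first in $s$ from $(s_1,\ph_1)$ to $(s_2,\ph_1)$ at fixed $\ph$, of $\bar G$-length exactly $|s_1-s_2|$, and then along a minimizing great-circle arc of $S$ from $(s_2,\ph_1)$ to $(s_2,\ph_2)$ at fixed $s=s_2$, of $\bar G$-length $\sqrt{a(s_2)}\,\on{dist}_S(\ph_1,\ph_2)\le \sqrt{M'}\,\on{dist}_S(\ph_1,\ph_2)$. Concatenation gives $\on{dist}((s_1,\ph_1),(s_2,\ph_2))\le |s_1-s_2|+\sqrt{M'}\,\on{dist}_S(\ph_1,\ph_2)$, the required estimate with $C=\max(1,\sqrt{M'})$. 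The lower bound is where the confinement does the real work. The $s$-part is immediate: the $1$-Lipschitz projection gives $|s_1-s_2|\le d$. For the $\ph$-part, take any path $\eta$ with $L(\eta)<d+1$; by the confinement its $s$-coordinate stays in $I'$, so $L(\eta)=\int_0^1\sqrt{a(s)\|\ph_t\|^2+s_t^2}\,dt\ge \sqrt{m'}\int_0^1\|\ph_t\|\,dt\ge \sqrt{m'}\,\on{dist}_S(\ph_1,\ph_2)$, the last step because $\ph(t)$ is a curve in $S$ joining $\ph_1$ to $\ph_2$ whose intrinsic length dominates $\on{dist}_S$. Passing to a minimizing sequence (whose members eventually have length $<d+1$) yields $d\ge \sqrt{m'}\,\on{dist}_S(\ph_1,\ph_2)$. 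Combining, $\on{dist}_S(\ph_1,\ph_2)+|s_1-s_2|\le\bigl((m')^{-1/2}+1\bigr)\,d$, and enlarging $C$ to absorb both sets of constants finishes the proof.

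The only delicate point I anticipate is the confinement step. One must check that restricting to paths of length $<d+1$ loses nothing — it does not, since $d$ is approached by a minimizing sequence — and, crucially, that the slab $I'$ is genuinely compact. Here the standing hypothesis $(W_-,W_+)=(-\infty,+\infty)$ enters: it guarantees that $a$ is a continuous, strictly positive function on all of $\R$, so even though $a$ may tend to $0$ as $s\to\pm\infty$, it attains a strictly positive minimum $m'$ and a finite maximum $M'$ on the compact $I'$, which is all the estimates require.
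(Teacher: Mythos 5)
Your proof is correct and takes essentially the same route as the paper's: both arguments use the $1$-Lipschitz $s$-projection to confine near-minimizing paths to a compact $s$-interval, pinch the warping function $a$ between two positive constants there, and then compare $\bar G$ with the product metric on $\R\times S$. The only difference is one of execution — the paper gets both inequalities at once from the two-sided pinching of the energy integrand and "taking the infimum," whereas you prove the upper bound by an explicit two-segment competitor (an $s$-segment followed by a great-circle arc at fixed $s$), which if anything treats that direction slightly more carefully, since it avoids having to confine near-minimizers of the product metric.
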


\begin{proof}
First we observe that
\[
|s_1 - s_2| \leq \int_0^1 |s_t(t)| \,dt
\leq \int_0^1 \sqrt{ a(s) \| \ph_t \|^2 + s_t^2 } \,dt = \on{Len}(s, \ph)\,,
\]
and hence by taking the infimum over all paths,
\[
|s_1 - s_2| \leq \on{dist}\left((s_1, \ph_1), (s_2, \ph_2)\right) < 2R\,.
\]
Thus $s$ is bounded on bounded geodesic balls.

Now let $(s_i, \ph_i)$ be chosen according to the assumptions and let $(s(t), \ph(t))$ be a path connecting $(s_1, \ph_1)$ and $(s_2, \ph_2)$ with $\on{Len}(s,\ph) < 2 \on{dist}\left((s_1, \ph_1), (s_2, \ph_2)\right)$. Then for $t \in [0,1]$, 
\[
\on{dist}\left((s_0, \ph_0), (s(t), \ph(t))\right)
\leq \on{dist}\left((s_0, \ph_0), (s_1, \ph_1)\right)
+ 2\on{dist}\left((s_1, \ph_1), (s_2, \ph_2)\right)
\leq 5 R\,.
\]
In particular the path remains in a bounded geodesic ball.

Thus there exists a constant $C > 1$, such that $C\inv \leq a(s) \leq C$ holds along $(s(t),\ph(t))$. From there we obtain
\[
C\inv \int_0^1 \| \ph_t \|^2 + s_t^2 \,dt
\leq \int_0^1 a(s) \| \ph_t \|^2 + s_t^2 \,dt
\leq C \int_0^1 \| \ph_t \|^2 + s_t^2 \,dt\,,
\]
and by taking the infimum over paths connecting $(s_1, \ph_1)$ and $(s_2, \ph_2)$ the desired result follows.
\end{proof}

\begin{proposition*} If $(W_-, W_+) = (-\infty, +\infty)$,
the space $(\R \x S, \bar G)$ is metrically and geodesically complete. The subspace $(\R\x S \cap C^\infty, \bar G)$ is geodesically complete.
\end{proposition*}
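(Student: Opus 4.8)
The plan is to read off metric completeness of $(\R\x S,\bar G)$ from the distance estimate of the preceding Lemma, and to read off geodesic completeness of both $(\R\x S,\bar G)$ and $(\R\x S\cap C^\infty,\bar G)$ from the explicit integration of the geodesic equation in Section~\ref{geodesics}. I deliberately avoid any Hopf--Rinow type argument, since in this infinite-dimensional situation metric completeness does not by itself force geodesic completeness.

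For metric completeness, let $(s_n,\ph_n)$ be a $\bar G$-Cauchy sequence. Being Cauchy it is $\bar G$-bounded, so after fixing a base point $(s_0,\ph_0)$ it lies, together with its eventual limit, inside a bounded metric ball; the Lemma then applies with one constant $C$. Its lower bound shows that $|s_m-s_n|$ and $\on{dist}_S(\ph_m,\ph_n)$ are each dominated by $C\cdot\on{dist}((s_m,\ph_m),(s_n,\ph_n))$, so $(s_n)$ is Cauchy in $\R$ and $(\ph_n)$ is Cauchy in $(S,\on{dist}_S)$. Now $\R$ is complete, and the $L^2$-sphere $S$ is complete for its great-circle distance, being a closed subset of the Hilbert space $L^2(M,\R)$ on which $\on{dist}_S$ is equivalent to the chordal distance via $\|\ph_1-\ph_2\|=2\sin(\tfrac{1}{2}\on{dist}_S(\ph_1,\ph_2))$. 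Hence $(s_n,\ph_n)$ converges in $\R\x S$, and the upper bound of the Lemma turns this into $\bar G$-convergence; so $(\R\x S,\bar G)$ is metrically complete.

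For geodesic completeness I would exploit that the geodesic equation \thetag{d} decouples. Its $\ph$-component is the explicit reparametrized great circle $\ph(t)=\tilde\ph(\al(t))$, where $\tilde\ph(t)=\cos(\|\ps_0\|t)\ph_0+\sin(\|\ps_0\|t)\tfrac{\ps_0}{\|\ps_0\|}$ is defined for all $t\in\R$ (the case $\ps_0=0$ giving the trivial radial geodesic $\ph\equiv\ph_0$), and the reparametrization is $\al(t)=a(s_0)\int_0^t a(s(\ta))\inv\,d\ta$. Thus $\ph$ is entirely governed by the single scalar function $s(t)$, which solves the second-order equation $s_{tt}=\tfrac{A_0^2}{2}\,a'(s)/a(s)^2$ with first integral \thetag{e}, namely $s_t^2=A_1-A_0^2/a(s)$. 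Since $A_0^2\ge0$ and $a(s)>0$, the first integral furnishes the a priori bound $|s_t|\le\sqrt{A_1}$, whence $|s(t)-s(0)|\le\sqrt{A_1}\,|t|$ and $s$ remains in a compact subinterval of $\R$ over any finite time span. On such an interval $a$ and $a'$ are smooth and $a$ is bounded below by a positive constant: here $(W_-,W_+)=(-\infty,+\infty)$ guarantees that $s$ sweeps only finite values of the coordinate, and $C_1>0$ (from positive definiteness of $G$) gives $a(s)=4C_1(r(s)^2)r(s)^2>0$. Standard one-dimensional ODE continuation then produces $s$ for all $t\in\R$; the integrand of $\al$ is bounded on finite intervals, so $\al(t)$ is finite and smooth for all $t$, and $\ph(t)=\tilde\ph(\al(t))$ is globally defined, staying on $S$ and, whenever $\ph_0,\ps_0$ are smooth, in $S\cap C^\infty$ as a fixed linear combination of two smooth functions. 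Hence no geodesic escapes in finite time, in either $\R\x S$ or $\R\x S\cap C^\infty$.

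The main obstacle is exactly that $S$ is not locally compact, so neither Hopf--Rinow nor the compact-exhaustion form of the escape lemma is at hand. The device that resolves it is the explicit integration of the $\ph$-equation, which collapses the geodesic flow onto the genuinely one-dimensional equation for $s$, where the bound $|s_t|\le\sqrt{A_1}$ is all that is needed. The one place to take care is at turning points, where $s_t=0$ and \thetag{e} cannot be solved for the sign of $s_t$; I would therefore run the continuation on the second-order equation for $s$, whose right-hand side is smooth there, so that the geodesic passes through turning points uninterrupted and \thetag{e} serves only to supply the bound.
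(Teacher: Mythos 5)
Your proof is correct, and its metric-completeness half is essentially the paper's own argument (Cauchy components via the Lemma's lower bound, completeness of $\R$ and of $(S,\on{dist}_S)$, then the upper bound to recover convergence in the geodesic distance); your justification that $S$ is complete via the chordal/great-circle distance equivalence fills in a detail the paper leaves implicit. Where you genuinely depart from the paper is geodesic completeness. The paper obtains it for $(\R\x S,\bar G)$ abstractly by citing \cite[Prop.~6.5]{Lang1999} --- a metrically complete \emph{strong} Riemannian Hilbert manifold is geodesically complete --- and uses the explicit description of geodesics only to check that smooth initial data keep the geodesic inside $\R\x S\cap C^\infty$. You instead prove global existence by hand: the conserved quantity $A_0=a(s)\|\ph_t\|$ collapses the flow to the scalar equation $s_{tt}=\tfrac{A_0^2}{2}\,a'(s)/a(s)^2$, the first integral (e) yields the a priori bound $|s_t|\le\sqrt{A_1}$, so over finite times $s$ stays in a compact subinterval of the coordinate range (exactly here $(W_-,W_+)=(-\infty,+\infty)$ enters), and ODE continuation --- correctly run on the second-order equation so that turning points are harmless --- extends $s$, hence $\al$ and $\ph=\tilde\ph\o\al$, to all of $\R$. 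This costs more work but is self-contained, treats the Hilbert manifold and the Fr\'echet subspace in one stroke, and localizes the role of the hypothesis. Two caveats. First, your stated reason for avoiding a Hopf--Rinow-type argument is inaccurate: for strong metrics in infinite dimensions, metric completeness \emph{does} imply geodesic completeness --- that implication is precisely Lang's proposition and is what the paper uses; what genuinely fails in infinite dimensions is the existence of minimizing geodesics and local compactness. The worry is legitimate only for the weak metric on $\R\x S\cap C^\infty$, which is why the paper, too, must handle that subspace by the explicit form of geodesics. Second, to conclude that \emph{every} maximal geodesic coincides with your explicit curve you should invoke uniqueness of solutions of the geodesic equation; this holds on $\R\x S$ because $\bar G$ is strong (smooth spray), and is inherited by geodesics lying in the $C^\infty$ subspace --- a one-sentence addition that closes the argument.
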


\begin{proof}
Given a Cauchy sequence $(s_n, \ph_n)_{n \in \mathbb N}$ in  $\R \x S$ with respect to the geodesic distance, the lemma shows that $(s_n)_{n \in \mathbb N}$ and $(\ph_n)_{n \in \mathbb N}$ are Cauchy sequences in $\R$ and $S$ respectively. Hence they have limits $s$ and $\ph$ and by the lemma  the sequence $(s_n, \ph_n)_{n \in \mathbb N}$ converges to $(s,\ph)$ in the geodesic distance as well. It is shown in \cite[Prop.~6.5]{Lang1999} that a metrically complete, strong Riemannian manifold is geodesically complete.

Since the $\ph$-part of a geodesic in $\R \x S$ is a reparametrization of a great circle, if the initial conditions lie in $\R \x S \cap C^\infty$, so will the whole geodesic. Hence $\R \x S \cap C^\infty$ is geodesically complete.
\end{proof}

The map $W \x \on{Id} \circ \Ph : L^2(M,\R) \setminus \{ 0 \} \to \R \x S$ is a diffeomorphism and an isometry with respect to the metrics $\tilde G$ and $\bar G$.

\begin{corollary*} If $(W_-, W_+) = (-\infty, +\infty)$, 
the space $(L^2(M,\R) \setminus \{ 0\}, \tilde G)$ is metrically and geodesically complete. The subset $(C^\infty(M,\R) \setminus \{0\}, \tilde G)$ is geodesically complete.
\end{corollary*}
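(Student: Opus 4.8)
The plan is to transport the completeness statements of the preceding Proposition through the isometric diffeomorphism recorded just above, namely $(W\x\on{Id})\o\Ph : (L^2(M,\R)\setminus\{0\},\tilde G)\to(\R\x S,\bar G)$. The only conceptual input needed is that both metric completeness and geodesic completeness are preserved under an isometric diffeomorphism: such a map preserves the lengths of all curves, hence carries the geodesic distance of one space isometrically onto that of the other, so Cauchy sequences and their limits correspond; and it carries geodesics to geodesics with the same maximal parameter intervals, so geodesic completeness transfers as well. Thus each notion passes between the two spaces in both directions.

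For the $L^2$-statement I would simply invoke this. By the construction of $\bar G$ as the polar-coordinate pushforward $(\Ph\i)^\ast\tilde G$, followed by the diffeomorphic change of radial variable $s=W(r)$---which under the standing hypothesis $(W_-,W_+)=(-\infty,+\infty)$ is a diffeomorphism $\R_{>0}\to\R$---the composite $(W\x\on{Id})\o\Ph$ is an isometric diffeomorphism onto $(\R\x S,\bar G)$. The Proposition asserts that the target is metrically and geodesically complete, so $(L^2(M,\R)\setminus\{0\},\tilde G)$ inherits both properties.

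For the smooth subspace I would restrict the same map. As established in the overview, $\Ph$ restricts to a diffeomorphism $C^\infty(M,\R)\setminus\{0\}\to\R_{>0}\x(S\cap C^\infty)$ and $W\x\on{Id}$ then to a diffeomorphism onto $\R\x(S\cap C^\infty)$; hence $(W\x\on{Id})\o\Ph$ restricts to an isometric diffeomorphism from $(C^\infty(M,\R)\setminus\{0\},\tilde G)$ onto $(\R\x S\cap C^\infty,\bar G)$, which is geodesically complete by the Proposition. Transporting geodesic completeness back gives the claim.

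I do not expect a genuine obstacle here: the analytic substance---the distance estimate of the Lemma and the completeness of $(\R\x S,\bar G)$ via Lang's theorem---is already in place, and the corollary is a transport-of-structure argument. The one point warranting a moment's care is the smooth case, where one must check that $(W\x\on{Id})\o\Ph$ maps $C^\infty(M,\R)\setminus\{0\}$ exactly onto $\R\x S\cap C^\infty$ (so that the restriction is again a \emph{surjective} isometric diffeomorphism), and recall why geodesics with smooth initial data stay smooth: their $\ph$-component is a reparametrized great circle and so remains in $S\cap C^\infty$, while the radial component solves a scalar ODE. This is exactly the mechanism used in the proof of the Proposition.
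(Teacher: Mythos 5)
Your proposal is correct and takes essentially the same route as the paper: the paper offers no separate proof of the corollary precisely because the preceding sentence establishes that $(W\x\on{Id})\o\Ph: L^2(M,\R)\setminus\{0\}\to\R\x S$ is an isometric diffeomorphism (restricting to one from $C^\infty(M,\R)\setminus\{0\}$ onto $\R\x S\cap C^\infty$), so both completeness statements transfer directly from the Proposition. Your additional care about the smooth restriction and the persistence of smooth geodesics just makes explicit what the paper leaves implicit.
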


It remains to consider the existence of minimal geodesics.

\begin{theorem*} If $(W_-, W_+) = (-\infty, +\infty)$, then 
any two points $(s_0,\ph_0)$ and $(s_1,\ph_1)$ in $\mathbb R\x S$ can be joined by a minimal geodesic.
If $\ph_0$ and $\ph_1$ lie in $S\cap C^\infty$, then the minimal geodesic also lies in $\mathbb R\x S\cap C^\infty$. 
\end{theorem*}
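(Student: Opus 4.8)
The plan is to reduce the infinite-dimensional problem to a two-dimensional surface of revolution and then to show that this reduction loses no distance. The obstruction to applying Hopf--Rinow directly is that $\R\x S$ is not locally compact, so the metric completeness established in the Proposition does not by itself produce minimizers. Set $\th=\on{dist}_S(\ph_0,\ph_1)\in[0,\pi]$ and choose a two-dimensional linear subspace $P\subset L^2(M,\R)$ containing $\ph_0$ and $\ph_1$ (this is forced to be $\on{span}(\ph_0,\ph_1)$ when these are independent, and is any plane through $\ph_0$ when $\ph_1=\pm\ph_0$). Let $C=P\cap S$, a great circle, and let $N=\R\x C$ carry the restricted metric $ds^2+a(s)\langle d\ph,d\ph\rangle$. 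Since great circles are totally geodesic in $S$ and the $s$-direction is tangent to $N$, the warped-product submanifold $N$ is totally geodesic in $(\R\x S,\bar G)$; hence every geodesic of $N$ is a geodesic of $\R\x S$. Exactly as in the Lemma and Proposition, $N$ is complete: it is a closed subset of the complete space $(\R\x S,\bar G)$, and on bounded balls $a$ is bounded away from $0$, so that Cauchy sequences in $N$ converge in $N$. Being finite-dimensional and complete, $N$ satisfies Hopf--Rinow, and so there is a minimizing geodesic $\bar\ga$ in $N$ joining $(s_0,\ph_0)$ to $(s_1,\ph_1)$, with $\on{Len}(\bar\ga)=\on{dist}_N\big((s_0,\ph_0),(s_1,\ph_1)\big)$.

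The heart of the matter is the identity $\on{dist}\big((s_0,\ph_0),(s_1,\ph_1)\big)=\on{dist}_N\big((s_0,\ph_0),(s_1,\ph_1)\big)$. One inequality ($\on{dist}\le\on{dist}_N$) is immediate, since any path in $N$ is a path in $\R\x S$. For the reverse, let $(s(t),\ph(t))$ be an arbitrary path in $\R\x S$ joining the two points and set $\rho(t)=\on{dist}_S(\ph_0,\ph(t))\in[0,\pi]$. The triangle inequality on $S$ shows that $\rho$ is absolutely continuous with $|\rho_t|\le\|\ph_t\|$ almost everywhere, whence
\[
\on{Len}(s,\ph)=\int_0^1\sqrt{s_t^2+a(s)\,\|\ph_t\|^2}\,dt
\ge\int_0^1\sqrt{s_t^2+a(s)\,\rho_t^2}\,dt\,.
\]
Let $c:[0,\pi]\to C$ be the unit-speed arc with $c(0)=\ph_0$ running toward $\ph_1$, so that $c(\th)=\ph_1$. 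Then $\be(t)=(s(t),c(\rho(t)))$ is a path in $N$ joining $(s_0,\ph_0)$ to $(s_1,\ph_1)$, and since $\|c'\|=1$ its length equals the right-hand side above. Thus $\on{Len}(s,\ph)\ge\on{Len}_N(\be)\ge\on{dist}_N\big((s_0,\ph_0),(s_1,\ph_1)\big)$, and taking the infimum over all paths gives $\on{dist}\ge\on{dist}_N$. The two distances therefore coincide.

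Consequently $\on{Len}(\bar\ga)=\on{dist}_N=\on{dist}$, so $\bar\ga$ is globally length-minimizing in $\R\x S$, and, being a geodesic of the totally geodesic submanifold $N$, it is a geodesic of $\R\x S$; this proves the first assertion. For the addendum, if $\ph_0,\ph_1\in S\cap C^\infty$ we may choose $P\subset C^\infty(M,\R)$, so that $C=P\cap S\subset S\cap C^\infty$ and hence $\bar\ga$ lies in $\R\x (S\cap C^\infty)$. The only genuine difficulty is the distance identity $\on{dist}=\on{dist}_N$ of the second paragraph: once the search for minimizers is confined to the finite-dimensional complete manifold $N$, the infinite-dimensional obstruction disappears, and the comparison via the $1$-Lipschitz function $t\mapsto\on{dist}_S(\ph_0,\ph(t))$ is what licenses that confinement.
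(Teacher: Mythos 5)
Your proof is correct, and its skeleton --- reduce to the finite-dimensional cylinder $N=\R\x(P\cap S)$ over the plane $P=\on{span}(\ph_0,\ph_1)$, observe that it is totally geodesic and complete, apply Hopf--Rinow there, and note that infinite-dimensional completeness alone would give nothing --- is the same as the paper's. Where you genuinely diverge is in the crucial step, showing that the minimizer found in $N$ is minimizing in all of $\R\x S$. The paper argues by polygonal approximation: any competitor curve $c$ is replaced by a shorter piecewise geodesic through points $c(t_0),\dots,c(t_N)$, which lies in the finite-dimensional totally geodesic submanifold $\R\x V(\ph(t_0),\dots,\ph(t_N))\cap S$; Hopf--Rinow there yields a geodesic $c_2$ with $\on{Len}(c_2)\le\on{Len}(c)$, and since the $\ph$-part of $c_2$ runs along a great circle, $c_2$ is forced back into $\R\x V(\ph_0,\ph_1)\cap S$. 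You instead prove the identity $\on{dist}=\on{dist}_N$ outright, by projecting an arbitrary competitor $(s(t),\ph(t))$ to the path $(s(t),c(\rho(t)))$ in $N$, where $\rho(t)=\on{dist}_S(\ph_0,\ph(t))$ satisfies $|\rho_t|\le\|\ph_t\|$, so that the projection does not increase length. Your route is more elementary and somewhat more robust: it needs only the triangle inequality on $S$, the fact that unit-speed great-circle arcs realize $\on{dist}_S$ (so that $c(\th)=\ph_1$), and two-dimensional Hopf--Rinow, whereas the paper's subdivision step quietly relies on the local theory of strong Riemannian Hilbert manifolds (geodesically convex balls, local minimality of geodesics) to guarantee that a shorter piecewise geodesic exists. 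What the paper's argument buys in exchange is that it does not use the warped-product structure around a fixed ``pole'' $\ph_0$: it works whenever finitely many points span finite-dimensional totally geodesic submanifolds, while your projection trick is tied to the rotationally symmetric form of $\bar G$. Two small points you should still patch: the comparison path $\be$ is only Lipschitz, so you should remark that $\on{dist}_N$ can equivalently be computed as an infimum over absolutely continuous paths (or smooth out $\be$ by approximation); and your one-line claim that $N$ is totally geodesic is most cleanly justified as in the paper, namely that $N$ is the fixed point set of the isometry $(s,\ph)\mapsto(s,\mathfrak{s}_P(\ph))$, with $\mathfrak{s}_P$ the orthogonal reflection in $P$, rather than by an appeal to warped-product formulas.
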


\begin{proof}
If $\ph_0$ and $\ph_1$ are linearly independent, we consider the 2-space $V=V(\ph_0,\ph_1)$ spanned by $\ph_0$ and $\ph_1$ in $L^2$. Then $\mathbb R\x V\cap S$  is totally geodesic since it is the fixed point set of the isometry $(s,\ph)\mapsto (s, \mathfrak s_V(\ph))$ where $\mathfrak s_V$ is the orthogonal reflection at $V$. Thus there is exists a minimizing geodesic between $(s_0,\ph_0)$ and $(s_1,\ph_1)$ in the complete 3-dimensional Riemannian submanifold $\mathbb R\x V\cap S$.   
This geodesic is also length-minimizing in the strong Hilbert manifold $\mathbb R\x S$ by the following argument: 

Given any smooth curve $c = (s,\ph):[0,1]\to \mathbb R\x S$ between these two points, 
there is a subdivision $0=t_0<t_1<\dots<t_N=1$ 
such that the piecewise geodesic $c_1$ which first runs along  a geodesic from $c(t_0)$ to $c(t_1)$, then to $c(t_2)$, \dots, and finally to $c(t_N)$, has length 
$\on{Len}(c_1)\le \on{Len}(c)$. 
This piecewise geodesic now lies in the totally geodesic $(N+2)$-dimensional  submanifold 
$\mathbb R\x V(\ph(t_0),\dots,\ph(t_N))\cap S$. Thus there exists a geodesic  $c_2$ between the two points $(s_0,\ph_0)$ and $(s_1,\ph_1)$ which is length-minimizing in this $(N+2)$-dimensional submanifold. 
Therefore $\on{Len}(c_2)\le \on{Len}(c_1)\le \on{Len}(c)$. Moreover, $c_2=(s\o c_2,\ph\o c_2)$ lies in $\mathbb R\x V(\ph_0, (\ph\o c_2)'(0))\cap S$ which also contains $\ph_1$, thus $c_2$ lies in 
$\mathbb R\x V(\ph_0,\ph_1)\cap S$.  

If $\ph_0=\ph_1$, then $\mathbb R\x \{\ph_0\}$ is a minimal geodesic. If $\ph_0=-\ph_0$ we choose a great circle between them which lies in a 2-space $V$ and proceed as above. When $\ph_0, \ph_1 \in C^\infty$, then the 3-dimensional submanifold $\R\x V \cap S$ lies in $\R \x S\cap C^\infty$ and hence so does the minimal geodesic.
\end{proof}





\subsection{Some geodesic completions}\label{completions}
The relation to hypersurfaces of revolution in Sect.~\ref{revolution} suggests that there are functions $C_1$ and $C_2$ such that geodesic incompleteness of the metric $G$ is 
due to a `coordinate singularity' at $W_-$ or at $W_+$. Let us write $I=(W_-,W_+)$.
We work in polar coordinates on the infinite-dimensional manifold $I \x (S\cap C^\infty)$ 
with the metric $\bar G = ds^2 + a(s)\langle d\ph,d\ph\rangle$.

\emph{Example.} For $I=(0,\infty)$ the metric $ds^2 + s^2\langle d\ph,d\ph\rangle$ describes the flat space 
$C^\infty(M,\mathbb R)\setminus \{0\}$ with the $L^2$-metric in polar coordinates. Putting $0$ back in geodesically completes the space. 

Moreover, for $\be\in (0,\pi/4]$ the metric $ds^2 + \sin^2(\be) s^2 \langle d\ph,d\ph\rangle$ describes the cone  with radial opening angle $\be$. Putting in 0 generates a tip;  sectional curvature  is a delta distribution at the tip of size $2(1-\sin(\be))\pi$. This is an orbifold with symmetry group $\mathbb Z/k\mathbb Z$ at the tip if 
$\sin(\be)=1/k$.

More generally, $ds^2 + K^2 s^2 \langle d\ph,d\ph\rangle$ describes the generalized cone whose `angle defect' at the tip is $2\pi (1-K)$; there is negative curvature at the tip if $K>1$ in which case we cannot describe it as a surface of revolution.

\emph{Example.} For $I= (0,\pi)$, the metric $ds^2 + \sin^2(s)\langle d\ph,d\ph\rangle$ describes the infinite-dimensional round sphere `of 1 dimension higher' with equator $S\cap C^\infty$ and with north- and south-pole omitted. 
This can be seen from the formula for sectional curvature from Sect.~\ref{curvature} below, or by transforming it to the hypersurface of revolution according to Sect.~\ref{revolution}. Putting back the two poles gives the geodesic completion. 
To realize this on the space of densities, we may choose a smooth and positive function $g_2(r)$ freely, and then put
\begin{align*}
g_1(r) &= \sin^2\Big(\int_1^r g_2(\rh)^{1/2}\,d\rh\Big)\,,
\quad
C_1(m) = \frac{g_1(\sqrt{m})}{4m}\,, 
\\
C_2(m) &= \frac1{4m} g_2(\sqrt{m}) - \frac1{4m^2} g_1(\sqrt{m})\,.
\end{align*}
Choosing $g_2(r)= 4r^2$ we get $g_1(r)=\sin^2(r^2-1)$ so that $C_1(m)=\frac1{4m}\sin^2(m-1)$
and $C_2(m)= 1- \frac1{4m^2}\sin^2(m-1)$ .

The general situation can be summarized in the following result:
\begin{theorem*}
If $W_->-\infty$ and if $C_1$ and $C_2$ have smooth extensions to $[0,\infty)$ and $C_1(0) > 0$, then the metric $\bar G$ has a smooth 1-point geodesic completion at $r=0$ (or $s=W_-$).

If $W_+<\infty$ and if $C_1$ and $C_2$ have smooth extensions to $(0,\infty]$ in the coordinate $1/m$, then the metric $\bar G$ has a smooth 1-point geodesic completion at $r=\infty$ (in the coordinate $1/r$), or at $s=W_+$.
\end{theorem*}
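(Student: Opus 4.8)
The plan is to show that the geodesic completion at $r=0$ is nothing exotic: it is simply the origin of the Cartesian model $C^\infty(M,\mathbb R)$, on which the metric $\tilde G$ from \thetag{a} extends smoothly. Recall that
\[
\tilde G_f(h,k) = 4C_1(\|f\|^2)\langle h,k\rangle + 4C_2(\|f\|^2)\langle f,h\rangle\langle f,k\rangle\,,
\]
and that $\Ph(f) = (\|f\|, f/\|f\|)$ identifies $(0,\infty)\x(S\cap C^\infty)$ isometrically with $(C^\infty(M,\mathbb R)\setminus\{0\}, \tilde G)$, the honest radius being $r = \|f\|$ and $s = W(r)$. Under the first set of hypotheses I would first record that $g_2(0) = 4C_1(0) > 0$, so that $W$ extends smoothly to $r=0$ and $W_- = W(0)$ is finite, consistent with the assumption, and that $a(W_-) = g_1(0) = 0$. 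I would also note that the sphere $\{r\}\x(S\cap C^\infty)$ collapses to the single point $f=0$ as $r\to 0$, so that adjoining exactly one point is the correct recipe, and that the polar singularity at $r=0$ is just the usual coordinate singularity of the natural Cartesian chart $f = r.\ph$.

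Next I would verify that $\tilde G$ extends to a smooth weak Riemannian metric on a neighborhood of the origin. Since $f\mapsto \|f\|^2 = \langle f,f\rangle$ is smooth and $C_1, C_2$ are smooth on $[0,\infty)$, the coefficients $f\mapsto C_i(\|f\|^2)$ are smooth across $f=0$, hence $f\mapsto \tilde G_f$ is smooth. At the origin $\tilde G_0 = 4C_1(0)\langle\cdot,\cdot\rangle$ is positive definite because $C_1(0)>0$, and the explicit bound $\tilde G_f(h,h)\ge 4\big(C_1(\|f\|^2) + \min(0,C_2(\|f\|^2))\|f\|^2\big)\|h\|^2$ shows positive-definiteness persists on a whole ball about the origin. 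Thus the completion is the smooth manifold $C^\infty(M,\mathbb R)$ near $0$ carrying the smooth metric $\tilde G$.

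For geodesic completeness at the tip I would invoke the first integral \thetag{e}, $s_t^2 = A_1 - A_0^2/a(s)$, with $a(W_-)=0$. If a geodesic has nonzero angular momentum $A_0\neq 0$, then $A_0^2/a(s)\to\infty$ as $s\to W_-$, forcing $s_t^2<0$; such geodesics turn around at the radius where $a(s) = A_0^2/A_1 > 0$ and never reach the tip. The only geodesics approaching $f=0$ are therefore the radial ones ($A_0=0$), namely the lines $\mathbb R.\ph_0$, which are totally geodesic as the fixed-point set of the isometric orthogonal reflection $g\mapsto 2\langle\ph_0,g\rangle\ph_0 - g$ fixing $\ph_0$. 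These pass smoothly through the origin (continuing to the antipode $-\ph_0$ in polar coordinates), and because $\tilde G$ is smooth at $0$ the radial geodesic extends through it. Hence no geodesic terminates at the $W_-$ end, which is exactly the claimed $1$-point completion.

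For the second statement I would repeat the argument in the coordinate $\rh = 1/r$ (equivalently $u = 1/m$), under which $r=\infty$ becomes $\rh=0$ and the hypothesis that $C_1, C_2$ be smooth in $1/m$ at $1/m=0$ is precisely what makes the coefficients smooth at the new origin; one then checks that $a(s)\to 0$ as $s\to W_+$ and that the analog of $C_1(0)>0$ holds, reducing to the first case. I expect the main obstacle to be this last point: carefully transcribing the smoothness and nondegeneracy conditions through the $1/m$-reparametrization so that the completed metric is again smooth and positive definite at the point at infinity, since here the relevant Cartesian chart is the inverted one and the collapse of the sphere $S\cap C^\infty$ to a single point must be re-examined in the new coordinate.
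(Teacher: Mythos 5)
Your proposal is correct and takes essentially the same route as the paper: the paper's entire proof consists of pulling the polar metric back through $\Ph$ to the Cartesian chart, where it reads $4C_1(\|f\|^2)\langle df,df\rangle + 4C_2(\|f\|^2)\langle f,df\rangle^2$ and therefore extends smoothly to $f=0$ because $C_1,C_2$ are smooth on $[0,\infty)$ and $f\mapsto\|f\|^2$ is smooth (the paper invokes Whitney's theorem on even functions at this point), with the case $W_+<\infty$ dismissed as ``similar''; your additional verifications --- positive definiteness near $0$ from $C_1(0)>0$, and the first-integral/reflection argument that only radial geodesics reach the tip and pass through it --- merely supply detail that the paper leaves implicit. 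One warning on the second case, which you yourself flag as the delicate point: under the inversion $r\mapsto 1/r$ the Cartesian coefficients transform to $\tilde C_1(u)=u^{-2}C_1(1/u)$ and $\tilde C_2(u)=u^{-4}C_2(1/u)$ with $u=1/m$, so smoothness of $C_1,C_2$ in the coordinate $1/m$ alone does \emph{not} make the coefficients smooth at the new origin --- one additionally needs the decay (only partially) forced by $W_+<\infty$ and positive definiteness, so your instinct that this transcription is the main obstacle is exactly right.
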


\begin{proof}
From the formulas in Sect.~\ref{overview} we get
\begin{multline*}
\Ph^*(g_1(r)\langle d\ph,d\ph\rangle + g_2(r)dr^2) 
= \frac{g_1(\|f\|)}{\|f\|^2}\langle df,df\rangle +\Big(\frac{g_2(\|f\|)}{\|f\|^2} - \frac{g_1(\|f\|)}{\|f\|^4}\Big)\langle f,df\rangle^2
\\
= 4C_1(\|f\|^2)\langle df,df\rangle + 4C_2(\|f\|^2)\langle f,df\rangle^2\,.
\end{multline*}
By a classical theorem of Whitney the even smooth functions $h(r)$ are exactly the smooth functions of $r^2$. So the metric extends smoothly at 0 to $C^{\infty}(M,\mathbb R)$.
The proof for the case $W_+ <\infty$  is similar.
\end{proof}

\subsection{Covariant derivative and curvature}\label{curvature}
In this section we will write $I = (W_-,W_+)$. In order to calculate the covariant derivative we consider the infinite-dimensional manifold $I \x S$ 
with the metric 
$\bar G = ds^2 + a(s)\langle d\ph,d\ph\rangle$ and smooth vector fields $f(s,\ph)\p_s + X(s,\ph)$ where 
$X(s,\;)\in \X(S)$ is a smooth vector field on the Hilbert sphere $S$.
We denote by $\nabla^S$ the covariant derivative on $S$ and get
\begin{align*}
 \p_s &\bar G\big(g\p_s+ Y,h\p_s + Z\big) = \p_s\big(gh + a\langle Y,Z\rangle\big) = 
\\&
= g_s h + g h_s + a_s\langle Y,Z\rangle + a\langle Y_s,Z\rangle + a\langle Y,Z_s\rangle
\\&
= \bar G\big( g_s\p_s + \frac{a_s}{2a}Y + Y_s, h\p_s + Z\big) 
+  \bar G\big(g\p_s + Y, h_s\p_s + \frac{a_s}{2a}Z + Z_s\big)
\\
X &\bar G\big(g\p_s+ Y,h\p_s + Z\big) = X\big(gh + a\langle Y,Z\rangle\big)
\\&
= dg(X).h + g.dh(X) + a\langle \nabla^S_XY,Z\rangle + a \langle Y,\nabla^S_XZ,\rangle
\\&
= \bar G\big(dg(X)\p_s + \nabla^S_XY, h\p_s+Z\big) + \bar G\big(g\p_s+Y,dh(X)\p_s + \nabla^S_XZ\big)\,.
\end{align*}
Thus the following covariant derivative on $I \x S$, which is not the Levi-Civita covariant derivative,
\begin{align*}
\bar\nabla_{f\p_s+X}(g\p_s+Y) = f.g_s\p_s + f\frac{a_s}{2a}Y + fY_s + dg(X)\p_s + \nabla^S_XY\,,
\end{align*}
respects the metric $ds^2 + a\langle d\ph,d\ph\rangle$. But it has torsion  which is given by
\begin{align*}
\on{Tor}&(f\p_s+X,g\p_s+Y)=
\\&
=\bar\nabla_{f\p_s+X}(g\p_s+Y) - \bar\nabla_{g\p_s+Y}(f\p_s+X) - [f\p_s + X, g\p_s + Y] =
\\&
=   \frac{a_s}{2a}(fY - gX)\,.
\end{align*}
To remove the torsion we consider the endomorphisms
\begin{gather*}
\on{Tor}_{f\p_s +X}, \on{Tor}_{f\p_s +X}^{\top} : T(I \x S)\to T(I \x S)\,,
\\
\on{Tor}_{f\p_s +X}(g\p_s+Y) =  \on{Tor}(f\p_s+X,g\p_s + Y)\,,
\\
\bar G\big(\on{Tor}_{f\p_s +X}^\top(g\p_s + Y), h\p_s + Z) = \bar G\big(g\p_s + Y,\on{Tor}_{f\p_s +X}(h\p_s+ Z)\big)
\end{gather*}
The endomorphism
\begin{align*}
&A_{f\p_s+ X}(g\p_s+Y) :=
\\& 
= \tfrac12\big(\on{Tor}(f\p_s+ X,g\p_s+Y) - \on{Tor}_{f\p_s+ X}^\top(g\p_s+Y) 
-  \on{Tor}_{g\p_s+ Y}^\top(f\p_s+X)\big)
\end{align*} 
is then $\bar G$-skew, so that
\begin{equation*}
\nabla_{f\p_s + X}(g\p_s + Y) =  \bar\nabla_{f\p_s + X}(g\p_s + Y)  - A_{f\p_s+ X}(g\p_s+Y)
\end{equation*}
still respects $\bar G$ and is now torsion free. In detail we get
\begin{align*}
 \on{Tor}_{f\p_s+ X}^\top(g\p_s+Y) &= - \frac{a_s}2\langle X,Y\rangle \p_s + \frac{a_s}{2a}fY
 \\
A_{f\p_s+ X}(g\p_s +Y) &= \frac{a_s}{2}\langle X,Y\rangle\p_s - \frac{a_s}{2a}gX\,,
\end{align*}
so that $\nabla$ is the Levi-Civita connection of $\bar G$:
$$\boxed{\;
\begin{aligned}
\nabla_{f\p_s + X}(g\p_s + Y) &= \big(f.g_s+ dg(X)- \frac{a_s}{2}\langle X,Y\rangle\big)\p_s 
\\&\quad
+ \frac{a_s}{2a}(fY  + gX)+ fY_s  + \nabla^S_XY\,.
\end{aligned}
\;}$$

For the curvature computation we assume from now on that all vector fields of the form $f\p_s+X$  have $f$ constant and $X=X(\ph)$ so that in this case
\begin{align*}
\nabla_{f\p_s + X}(g\p_s + Y) &=  -\frac{a_s}{2}\langle X,Y\rangle \p_s 
+ \frac{a_s}{2a}(fY  + gX)  + \nabla^S_XY\,,
\\
[f\p_s + X,g\p_s + Y] &= [X,Y]^S\,,
\end{align*}
in order to obtain
\begin{align*}
&\nabla_{f\p_s + X}\nabla_{g\p_s + Y}(h\p_s+Z) =
\nabla_{f\p_s + X}
\big(-\frac{a_s}{2}\langle Y,Z\rangle \p_s + \frac{a_s}{2a}(gZ  + hY)  + \nabla^S_YZ\big) 
\\&
=\Big(-f\frac{a_{ss}}{2}\langle Y,Z\rangle-\frac{a_s}{2}\langle \nabla^S_XY,Z\rangle
-\frac{a_s}{2}\langle Y,\nabla^S_XZ\rangle
\\&\qquad\qquad
-\frac{a_s^2}{4a}g\langle X, Z\rangle  -\frac{a_s^2}{4a}h\langle X,Y\rangle  
-\frac{a_s}2\langle X, \nabla^S_YZ\rangle
\Big)\p_s
\\&\quad
+ \frac{a_s^2}{4a^2}fgZ  + \frac{a_s^2}{4a^2}fhY  + \frac{a_s}{2a}f\nabla^S_YZ -\frac{a_s^2}{4a}\langle Y,Z\rangle X
\\&\quad
+\big(\frac{a_s}{2a}\big)_sfgZ  + \big(\frac{a_s}{2a}\big)_sfhY  
+ \frac{a_s}{2a}g\nabla^S_XZ  + \frac{a_s}{2a}h\nabla^S_XY  + \nabla^S_X\nabla^S_YZ
\\
-&\nabla_{[f\p_s + X,g\p_s + Y]}(h\p_s+Z) = -\nabla_{[X,Y]^S}(h\p_s+Z)
\\&
= +\frac{a_s}{2}\langle [X,Y]^S,Z\rangle\p_s - \frac{a_s}{2a}h[X,Y]^S - \nabla^S_{[X,Y]^S}Z
\end{align*}
Summing up we obtain the curvature (for general vector fields, since curvature is of tensorial character)
\begin{align*}
&\mathcal R(f\p_s + X,g\p_s + Y)(h\p_s + Z) =
\\&
= \big(\frac{a_{ss}}{2} - \frac{a_s^2}{4a}\big)\langle gX-fY,Z\rangle\p_s
+ \mathcal R^S(X,Y)Z 
\\&\quad
- \big(\big(\frac{a_s}{2a}\big)_s + \frac{a_s^2}{4a^2}\big)h(gX-fY)
+\frac{a_s^2}{4a}\big(\langle X,Z\rangle Y - \langle Y,Z\rangle X\big)\,.
\end{align*}
and the numerator for sectional curvature
\begin{align*}
&\bar G\big(\mathcal R(f\p_s + X,g\p_s + Y)(g\p_s + Y),f\p_s  +X\big)=
 a\langle\mathcal R^S(X,Y)Y,X\rangle 
\\&\quad
-\big(\frac{a_{ss}}{2} - \frac{a_s^2}{4a}\big)\big\langle gX-fY ,gX-fY\big\rangle
+\frac{a_s^2}{4}\big(\langle X,Y\rangle^2 - \langle Y,Y\rangle \langle X,X\rangle\big)\,.
\end{align*}
Let us take $X,Y\in T_\ph S$ with $\langle X,Y\rangle=0$ and 
$\langle X,X\rangle=\langle Y,Y\rangle = 1/a(s)$, then 
\begin{align*}
\on{Sec}_{(s,\ph)}(\on{span}(X,Y)) &= \frac1a - \frac{a_s^2}{4a^2}\,, &
\on{Sec}_{(s,\ph)}(\on{span}(\p_s,Y)) &= -\frac{a_{ss}}{2a} + \frac{a_s^2}{4a^2}\,,
\end{align*}
are all the possible sectional curvatures. 
Compare this with the formulae for the principal curvatures of a hypersurface of revolution in  \cite{CollHarrison13} and with the formulas for rotationally symmetric Riemannian metrics in \cite[Sect.~3.2.3]{Petersen2006}.

\subsection{Example}
The simplest case is the choice $C_1(\la) = \frac 1\la$ and $C_2(\la) = 0$. The Riemannian metric is
\[
G_\mu(\al, \be) = \frac{1}{\mu(M)} \int_M \frac \al\mu \frac \be\mu \mu\,.
\]
Then $g_1(r) = 4$ and $g_2(r) = \frac 4{r^2}$. 
This metric is geodesically complete on $C^\infty(M,\mathbb R)\setminus \{0\}$.
The geodesic equation \thetag{d} simplifies to
\[
r_{tt} = \frac{r_t^2}{r}\,.
\]
This ODE can be solved explicitely and the solution is given by
\[
r(t) = r(0) \exp \left( \frac{r_t(0)}{r(0)} t \right)\,.
\]
The reparamterization map is $\al(t) = t$ and thus the geodesic
\[
\ph(t) = \cos\left(\| \ph_t(0)\| t\right) + \sin\left(\| \ph_t(0)\| t\right)
\frac{\ph_t(0)}{\| \ph_t(0)\|}\,,
\]
describes a great circle on the sphere with the standard parametrization. Note that geodesics with $r_t(0) = 0$ are closed with period $2\pi / \| \ph_t(0)\|$. The spiraling behaviour of the geodesics can be seen in Fig.~\ref{fig:sim}.

\begin{figure}
\centering
	\includegraphics[width=.4\textwidth]{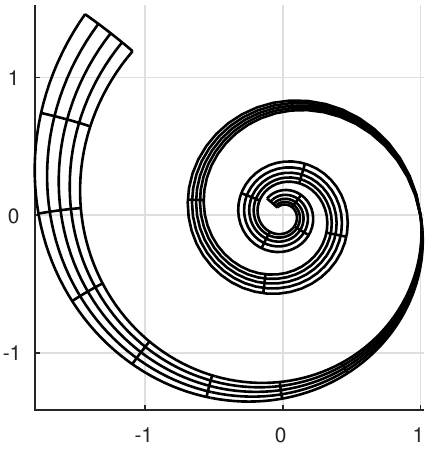}
	\includegraphics[width=.4\textwidth]{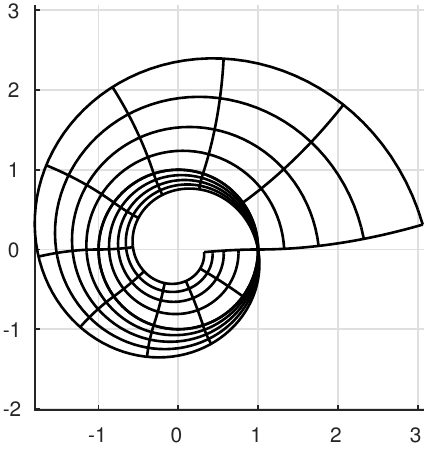}	
	\caption{Fixing $\ph(0)$, $\ph_t(0)$ with $\|\ph_t(0)\|=1$, the figure shows geodesics $r(t).\ph(t)$ starting at $r(0)=1$ for various choices of $r_t(0)$; the geodesics are shown in the orthonormal basis $\{\ph(0), \ph_t(0)\}$. A periodic geodesic can be seen on the right. The coefficients in the metric are $C_1(\la) = \la\inv$ and $C_2(\la) = 0$.}
	\label{fig:sim}
\end{figure}

\subsection{Example.} By setting $C_1(\la) = 1$ and $C_2(\la) = 0$ we obtain the Fisher--Rao metric on the space of all densities. The Riemannian metric is
\[
G_\mu(\al, \be) = \int_M \frac \al\mu \frac \be\mu \mu\,.
\]
In this case $g_1(r) = 4 r^2$ and $g_2(r) = 4$. 
The metric is incomplete towards 0 on $C^\infty(M,\mathbb R)\setminus \{0\}$. The pullback metric \thetag{b} is
\[
\tilde G = 4 r^2 \langle d\ph, d\ph \rangle + 4 dr^2\,,
\] 
and hence geodesics are straight lines in $C^\infty(M,\R) \setminus \{0\}$. In terms of the variables $(r,\ph)$, the geodesic equation \thetag{d} for $r$ is
\[
r_{tt} = \frac{A_0^2}{16} \frac 1{r^3}\,,
\]
with $A_0 = 4 r(0)^2 \| \ph_t(0)\|$. 

\begin{figure}
\centering
	\includegraphics[width=.4\textwidth]{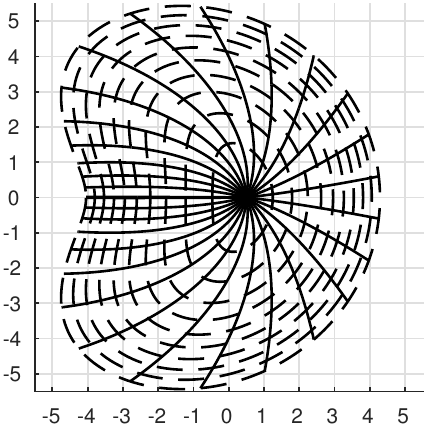}
	\includegraphics[width=.4\textwidth]{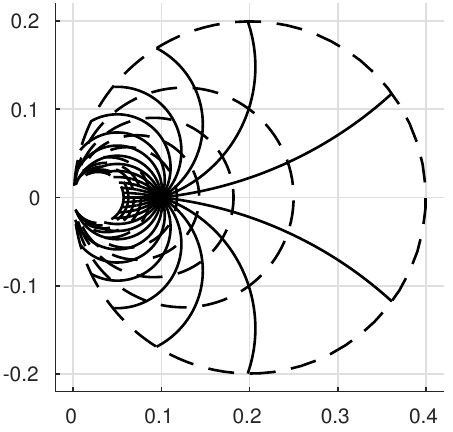}	
	\caption{Fixing $\ph(0)$, $\ph_t(0)$ with $\|\ph_t(0)\|=1$, the figure shows geodesics $r(t).\ph(t)$ for various choices of $r_t(0)$; on the left the extended Fisher--Rao metric with $C_1=C_2=1$ with geodesics starting from $r(0)=1$; on the right the metric with $C_1=\frac 1{r^2}$ with geodesics starting from $r(0)=0.1$.}
	\label{fig:ext_inf}
\end{figure}

\subsection{Example} Setting $C_1(\la) = 1$ and $C_2(\la) = 1$ we obtain the extended metric
\[
G_\mu(\al, \be) = \int_M \frac \al\mu \frac \be\mu \mu + \int_M \al \int_M \be\,.
\]
In this case $g_1(r) = 4r^2$ and $g_2(r) = 4r^2 + 4$. The geodesic equation \thetag{d} is
\[
r_{tt} = \frac{A_0^2 - 16 r^4 r_t^2}{16 r^3 \left( 1+ r^2 \right)}\,.
\]
The metric on $C^\infty(M,\mathbb R)\setminus \{0\}$ is incomplete towards 0. Geodesics for the metric can be seen in Fig.~\ref{fig:ext_inf}. Note that only the geodesic going straight into the origin seems to be incomplete.

\subsection{Example} Setting $C_1(\la) = \frac{1}{\la^2}$ and $C_2(\la) = 0$ we obtain the metric
\[
G_\mu(\al, \be) = \frac{1}{\mu(M)^2} \int_M \frac \al\mu \frac \be\mu \mu\,,
\]
which is complete towards 0, but incomplete towards infinity on $C^\infty(M,\mathbb R)\setminus \{0\}$. 
We have $g_1(r) = 4/r^2$ and $g_2(r) = 4 / r^2$. The geodesic equation \thetag{d} is
\[
r_{tt} = \frac{2 r_t^2 - A_0^2 r^6}{16 r}\,.
\]
Examples of geodesics can be seen in Fig.~\ref{fig:ext_inf}. Note that the geodesic ball extends more towards infinity than towards the origin.


\end{document}